\title{Faithfulness of Top Local Cohomology\\ Modules in Domains}
\author{Melvin Hochster$^1$ and Jack Jeffries$^2$}
\address{Department of Mathematics \\ University of Michigan \\ Ann Arbor, MI 48109--1043, USA}
\email{hochster@umich.edu}
\address{Matem\'aticas B\'asicas Group \\ Centro de Investigaci\'on en Matem\'aticasl \\ Guanajuato, Gto. 36023, M\'exico}
\email{jeffries@cimat.mx}
\date{\today}
\theoremstyle{plain}
\newtheorem{theorem}{Theorem}[section]
\newtheorem{corollary}[theorem]{Corollary}
\newtheorem{proposition}[theorem]{Proposition}
\newtheorem{lemma}[theorem]{Lemma}
\newtheorem{conjecture}[theorem]{Conjecture}
\theoremstyle{remark}
\newtheorem{remark}[theorem]{Remark}
\theoremstyle{definition}
\newtheorem{question}[theorem]{Question}
\newcommand{\inc}{\subseteq}
\newcommand{\fA}{\mathfrak{A}}
\newcommand{\fm}{\mathfrak{m}}
\newcommand{\Hom}{\mathrm{Hom}}
\newcommand{\Ker}{\mathrm{Ker}}
\newcommand\rmk{(R,\,\fm,\,K)}
\newcommand{\Spec}{\textrm{Spec}}
\newcommand{\surj}{\twoheadrightarrow}
\newcommand{\mx}{\begin{pmatrix}}
\newcommand{\emx}{\end{pmatrix}}
\def\todo#1
\def\forth#1
\newcommand{\cdim}{\mathrm{cd}}
\newcommand{\Ann}{\mathrm{Ann}}
\begin{document}

\begin{abstract} We study the conditions under which the highest nonvanishing local cohomology module of a domain $R$
with support in an ideal $I$ is faithful over $R$, i.e., which guarantee that $H^c_I(R)$ is faithful, where $c$ is the cohomological dimension
of $I$.   In particular, we prove that this is true for the case of positive prime characteristic
when $c$ is the number of generators of $I$.  \end{abstract}

\subjclass[2000]{Primary 13}

\keywords{cohomological dimension, local cohomological dimension, positive characteristic}

\thanks{$^1$The first author was partially supported by  National Science Foundation grants
DMS--1401384 and DMS--1902116.}
\thanks{{$^2$The second author was partially supported by National Science Foundation grant DMS--1606353.}}

\maketitle

\pagestyle{myheadings}
\markboth{MELVIN HOCHSTER AND JACK JEFFRIES}{FAITHFULNESS OF TOP LOCAL COHOMOLOGY MODULES}

\section{Introduction}\label{intro}

Throughout, all rings are commutative, Noetherian, associative with identity, and {\it local} ring $\rmk$ means Noetherian ring $R$
with unique maximal ideal $\fm$ and residue class field $R/\fm = K$.   

The \emph{local cohomology} functors with support in an ideal $I$ of $R$ are defined as $H^i_I(-):=\varinjlim_n \mathrm{Ext}^i_R(R/I^n,-)$. The vanishing or nonvanishing of the modules $H^i_I(R)$ is related to many other interesting algebraic and geometric properties of $R$ and $I$. For example,
\begin{itemize}
	\item The least $i$ for which $H^i_I(R)\neq 0$ is the depth of $I$ on $R$ \cite[Theorem~9.1]{24Hours};
	\item The largest $i$ for which $H^i_{\fm}(R)\neq 0$ in a local ring $(R,\fm)$ is the dimension of $R$  \cite[Theorem~9.3]{24Hours};
	\item If $(R,\fm)$ is a complete local domain, and $H^{i}_{I}(R)=0$ for $i>\dim(R)-2$, then $\mathrm{Spec}(R) \smallsetminus V(I)$ is connected \cite[Theorem~15.11]{24Hours};
	\item For $R=\mathbb{C}[x_1,\dots,x_n]$, if $H^i_I(R)=0$ for all $i>t$, then we have that, for all $i>t$, $H^{n+i}((\mathbb{C}^n \smallsetminus V(I))^{\mathrm{an}})=0$,  where ${\underline{\;\;}}^{\mathrm{an}}$ denotes the associated analytic space \cite[Theorem~19.25]{24Hours};
	\item The \emph{cohomological dimension of $I$}, the largest $i$ for which $H^i_I(R)\neq 0$, is a lower bound for the \emph{arithmetic rank} of $I$, the minimal number of generators of $I$ up to radical  \cite[Proposition~9.12]{24Hours}. 
\end{itemize}

Related to the question of vanishing is the study of annihilators of local cohomology. In this paper, we study the following question, which we state in two equivalent forms:

\begin{question}\label{mainqu}
\begin{enumerate}[(a)]
\item\label{qu-a} If $R$ is a domain that contains a field, $I$ is an ideal of $R$, and $c$ the cohomological dimension of $I$, must  $H^c_I(R)$ be a faithful $R$-module?
\item\label{qu-b} If $R$ contains a field, $I$ is an ideal of $R$, and $c$ the cohomological dimension of $I$, must the annihilator of $H^c_I(R)$ have height zero?
\end{enumerate}
\end{question}

This question is inspired by a conjecture of Lynch \cite{Ly1, Ly2}, which posits that if  $c$ is the cohomological dimension of the ideal $I$ of a local ring  
$\rmk$, and $J$ is the annihilator of the local cohomology module $H^c_I(R)$, then $R/J$ has the same Krull dimension as $R$. A number of positive results on Lynch's conjecture, that is cases where Question~\ref{mainqu} has an affirmative answer, have been established, including that it holds for rings of dimension at most three. We refer the reader to \cite{BoEg} for a summary of some of these results. 

However, Lynch's conjecture is false. The first counterexample to this was given by Bahmanpour~\cite{Bah}; this example is a nonequidimensional algebra over a field of arbitrary characteristic. A nonequidimensional counterexample in dimension three appears in~\cite{SiWa}. A counterexample to Lynch's conjecture in a power series ring over a DVR of mixed characteristic is given in forthcoming work of Datta, Switala, and Zhang \cite{DSZ}. We note here that for regular rings of characteristic zero \cite{Lyu1} and for strongly F-regular rings of positive characteristic \cite{BoEg} (hence for all regular rings containing a field), {every} nonzero local cohomology module is faithful.

In this note, we answer Question~\ref{mainqu} affirmatively in two main cases:
\begin{enumerate}[(i)]
	\item\label{case-1} $\mathrm{char}(R)=p>0$, and $\mathrm{cd}(I)=\mathrm{ara}(I)$;
	\item\label{case-2} $R$ is pure in a regular ring containing a field.
\end{enumerate} 
Notably, local cohomology modules in case~(\ref{case-1}) above  have closed support \cite{Kat}, but may have infinitely many 
associated primes \cite{SiSw}.

We also show that cases where there is an affirmative answer to Question~\ref{mainqu} imply a persistence property for cohomological dimension; see \S\ref{perscd} and, in particular, Corollary~\ref{pers.cor}.

\section{Main results}\label{main} %%Section 2

For an ideal $I$ in a ring $T$ and a $T$-module $M$, we denote the cohomological dimension of $I$ with support in $M$ as
\[ \mathrm{cd}(I,M):= \sup\{ n \in \mathbb{N} \ | \ H^n_I(M)\neq 0 \}.\]
To prepare for the proof of the main theorem, we record a couple of lemmas that are likely known to experts.

\begin{lemma}\label{cd-Ass}
	Let $T$ be a Noetherian ring, and $I$ be an ideal. \begin{align*}
	\cdim(I,T)  &= \max\{ \cdim(I,M) \ | \ M \text{ is an } R\text{-module}\} \\
	&=\max\{\cdim(I, T/Q) \ | \ Q\in \mathrm{Min}(T)\}.
	\end{align*}
\end{lemma}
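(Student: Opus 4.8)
The plan is to deduce both equalities from a single domination fact: that $\cdim(I,M)\le\cdim(I,T)$ for every $T$-module $M$. Write $a=\cdim(I,T)$, let $b$ denote the maximum of $\cdim(I,M)$ over all $T$-modules $M$, and let $d=\max\{\cdim(I,T/Q)\mid Q\in\mathrm{Min}(T)\}$. The inequalities $a\le b$ and $d\le b$ are immediate, since $T$ and each $T/Q$ are $T$-modules. Once the domination fact is established it gives $b\le a$, hence $a=b$ and also $d\le a$; it then remains only to prove $a\le d$.

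For the domination fact I would compute local cohomology from the extended \v{C}ech complex $C^\bullet=C^\bullet(f_1,\dots,f_r;T)$ on a generating set $f_1,\dots,f_r$ of $I$: this is a complex of flat $T$-modules concentrated in degrees $0,\dots,r$, and $H^n_I(M)=H^n(C^\bullet\otimes_T M)$ for every $M$. The homological heart is the claim that if $C^\bullet$ is a bounded complex of flat modules with $H^n(C^\bullet)=0$ for all $n>c$, then $H^n(C^\bullet\otimes_T M)=0$ for all $n>c$ and all $M$. I would prove this by descending induction on the top degree $r$. If $r\le c$ there is nothing to show. If $r>c$, then $H^r(C^\bullet)=0$ forces $d^{r-1}\colon C^{r-1}\to C^r$ to be surjective; its kernel $Z$ sits in $0\to Z\to C^{r-1}\to C^r\to 0$ with $C^{r-1},C^r$ flat, and a $\mathrm{Tor}$ long exact sequence shows $Z$ is flat. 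Replacing $C^{r-1}\to C^r$ by $Z$ in degree $r-1$ produces a flat subcomplex $E^\bullet$ of top degree $r-1$; the quotient $C^\bullet/E^\bullet$ is the acyclic complex $[C^r\xrightarrow{\ \mathrm{id}\ }C^r]$, whose terms are flat, so tensoring the short exact sequence of complexes with $M$ stays exact and keeps the quotient acyclic. Hence $H^n(E^\bullet\otimes_T M)\cong H^n(C^\bullet\otimes_T M)$ for all $n$, and the same comparison over $T$ itself shows $H^n(E^\bullet)\cong H^n(C^\bullet)$, so the induction hypothesis applies to $E^\bullet$ and finishes the claim. Taking $C^\bullet$ to be the \v{C}ech complex yields the domination fact.

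For $a\le d$ I would use a prime filtration $0=T_0\subset T_1\subset\cdots\subset T_k=T$ of $T$ as a module over itself, with $T_i/T_{i-1}\cong T/P_i$ for primes $P_i$. The long exact sequences in local cohomology give $\cdim(I,T)\le\max_i\cdim(I,T/P_i)$. For each $P_i$ choose a minimal prime $Q\subseteq P_i$; then $T/P_i$ is a module over the domain $T/Q$, so applying the already-proved domination fact over the Noetherian ring $T/Q$, together with the independence of the base ring for local cohomology of $(T/Q)$-modules, gives $\cdim(I,T/P_i)=\cdim_{T/Q}(\bar I,T/P_i)\le\cdim_{T/Q}(\bar I,T/Q)=\cdim(I,T/Q)\le d$, where $\bar I$ is the image of $I$ in $T/Q$. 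Hence $a\le d$, and combined with the above we conclude $a=b=d$.

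I expect the main obstacle to be the domination fact, specifically the verification that the kernel of the top differential is flat and that passing to the truncation does not alter the cohomology after tensoring with $M$; everything else is formal bookkeeping with long exact sequences and prime filtrations. If one prefers, the domination fact may instead be quoted from standard references on local cohomology, in which case the whole lemma reduces to the prime-filtration argument of the preceding paragraph.
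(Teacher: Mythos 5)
Your proposal is correct and takes essentially the same approach as the paper: the paper cites the first equality (your ``domination fact'') as standard and then runs the same prime-filtration argument, using that fact over the quotient domains to pass from the primes occurring in the filtration down to minimal primes. The only real difference is that you prove the domination fact from scratch via flat truncation of the \v{C}ech complex rather than quoting it, which is a correct, self-contained substitute for the paper's citation.
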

\begin{proof}
	The first equality is standard; see \cite[Theorem~9.6]{24Hours}.  For the second, let $c=\cdim(I,T)$. By the first equality, we have $\cdim(I,T)\geq \cdim(I,T/P)$ for all $P\in \Spec(R)$, and $\cdim(I,T/P)\geq \cdim(I,T/Q)$ if $P\subseteq Q$. Take a prime filtration $\{T_i\}$ of $T$. From the long exact sequence and the first equality we get right-exact sequences
	\[ H^c_I(T_i) \to H^c_I(T_{i+1}) \to H^c_I(T/Q_i) \to 0, \qquad Q_i\in \Spec(R), \]
	for each $i$. If $H^c_I(T/P)=0$ for every minimal prime of $T$, then $H^c_I(T/Q_i)=0$ for all $i$, and inductively we find that $H^c_I(T)=0$, a contradiction.
	\end{proof}
	
The equivalence of the statements~(\ref{qu-a}) and~(\ref{qu-b}) of Question~\ref{mainqu} follows easily from the previous lemma.

\begin{proposition}\label{prop-equiv}
Question~\ref{mainqu}~(\ref{qu-a}) and Question~\ref{mainqu}~(\ref{qu-b}) are equivalent.
\end{proposition}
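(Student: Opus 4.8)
The plan is to show each direction of the equivalence by reducing the faithfulness question over the domain $R$ to a question about minimal primes, and then invoking Lemma~\ref{cd-Ass} to transfer information about cohomological dimension between a ring and its quotients by minimal primes. First I would clarify how the two formulations connect. In part~(\ref{qu-a}) we have a domain $R$, so ``$H^c_I(R)$ is faithful'' means its annihilator is $(0)$, equivalently the annihilator has height zero since $(0)$ is the unique minimal prime of a domain. In part~(\ref{qu-b}) $R$ need only contain a field, so the content is that the annihilator $J$ of $H^c_I(R)$ has height zero, i.e.\ $J$ is contained in some minimal prime of $R$.

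To prove (\ref{qu-a})$\Rightarrow$(\ref{qu-b}), I would argue by contradiction: suppose $R$ contains a field but the annihilator $J$ of $H^c_I(R)$ has positive height, so $J$ is contained in no minimal prime and therefore meets every minimal prime nontrivially. The key point is that $c=\cdim(I,R)$ is achieved on some quotient by a minimal prime: by Lemma~\ref{cd-Ass}, $c=\max\{\cdim(I,R/Q)\mid Q\in\mathrm{Min}(R)\}$, so there is a minimal prime $Q$ with $\cdim(I,R/Q)=c$, whence $H^c_I(R/Q)\neq 0$. Now $R/Q$ is a domain containing a field, so by the hypothesis (\ref{qu-a}) applied to $R/Q$, the module $H^c_I(R/Q)$ is faithful over $R/Q$. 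The plan is to relate $H^c_I(R/Q)$ to $H^c_I(R)$ via the surjection $R\surj R/Q$: since $c$ is the top nonvanishing degree for $R$, the long exact sequence (equivalently the right-exactness observed in the proof of Lemma~\ref{cd-Ass}) yields a surjection $H^c_I(R)\surj H^c_I(R/Q)$. Consequently the annihilator $J$ of $H^c_I(R)$ annihilates $H^c_I(R/Q)$, so the image of $J$ in $R/Q$ lies in the annihilator of the faithful module $H^c_I(R/Q)$ and hence is zero, i.e.\ $J\subseteq Q$. This contradicts the assumption that $J$ has positive height and proves the height is zero.

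For the converse (\ref{qu-b})$\Rightarrow$(\ref{qu-a}), the reduction is immediate: a domain $R$ containing a field is in particular a ring containing a field, so (\ref{qu-b}) applies and the annihilator $J$ of $H^c_I(R)$ has height zero; but in a domain the only height-zero prime is $(0)$, so $J\subseteq(0)$, i.e.\ $J=(0)$ and $H^c_I(R)$ is faithful. This gives exactly statement~(\ref{qu-a}).

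The main obstacle is justifying the surjection $H^c_I(R)\surj H^c_I(R/Q)$ in the first direction, which is what forces us to use that $c$ is the \emph{top} nonvanishing cohomological degree over $R$. Concretely, from the short exact sequence $0\to Q\to R\to R/Q\to 0$ one gets $H^c_I(R)\to H^c_I(R/Q)\to H^{c+1}_I(Q)$, and the right-hand term vanishes because $\cdim(I,Q)\le\cdim(I,R)=c$ by the first equality of Lemma~\ref{cd-Ass}; this is precisely the right-exactness exploited in the proof of that lemma. Once this surjectivity is in hand, the annihilator comparison and the passage to the domain $R/Q$ are routine, so the substance of the argument is entirely contained in the two applications of Lemma~\ref{cd-Ass}.
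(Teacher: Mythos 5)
Your proposal is correct and follows essentially the same route as the paper: both directions hinge on Lemma~\ref{cd-Ass} to locate a minimal prime $Q$ with $H^c_I(R/Q)\neq 0$, and then transfer the annihilator into $Q$ by applying the domain case to $R/Q$ (the paper phrases the transfer via the isomorphism $H^c_I(R)\otimes_R R/Q\cong H^c_I(R/Q)$, while you use the equivalent surjection $H^c_I(R)\surj H^c_I(R/Q)$ coming from right-exactness in top degree). The only cosmetic difference is your contradiction framing versus the paper's direct argument that $\Ann_R(H^c_I(R))\subseteq Q$.
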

\begin{proof}
If (\ref{qu-b}) has an affirmative answer, then clearly (\ref{qu-a}) does as well, since the only height zero ideal in a domain is the zero ideal. If (\ref{qu-a}) has an affirmative answer, let $c=\mathrm{cd}(I,R)$, and let $r\in \mathrm{Ann}_R(H^c_I(R))$. Then by Lemma~\ref{cd-Ass}, we have $H^c_I(R/P)\neq 0$ for some $P\in \mathrm{Min}(R)$, and the image of $r$ in $R/P$ annihilates $H^c_I(R) \otimes_R R/P \cong H^c_I(R/P)$. Then, by assumption, the image of $r$ is zero in $R/P$, so $\mathrm{Ann}_R(H^c_I(R))\subseteq P$, and consequently the annihilator has height zero.
\end{proof}

See also \cite[Proposition~3.1]{Bah} for another equivalent version of the conjecture. The following lemma is a form of local duality. Note that we are not restricting to finitely generated modules in the statement below.

\begin{lemma}\label{local-duality} Let $(A,\fm,k)$ be a complete Gorenstein local ring of dimension $d$. Let $E = H^d_{\fm}(A)$,
which is an injective hull of $k = A/\fm$ over $A$, and let $(-)^{\lor}=\Hom_A(-,E)$ be the Matlis duality functor. 
	
	Then, there is a natural isomorphism $\mathrm{Ext}^i_A(M,A) \cong H^{d-i}_{\fm}(M)^{\lor}$ for all $A$-modules $M$ and all $i=0,\dots,d$.
\end{lemma}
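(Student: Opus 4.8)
The plan is to sidestep the dualizing-complex / minimal-injective-resolution formulation of Grothendieck duality --- which is cleanest when $M$ is finitely generated --- and instead realize the isomorphism directly through the \v{C}ech complex, so that at no point is $M$ forced to be finitely generated. The two ingredients will be: a natural identification of $H^{d-i}_{\fm}(M)$ with a $\mathrm{Tor}$, valid for all $M$, and a formal Matlis-duality manipulation that uses only the injectivity of $E$ and completeness of $A$.

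First I would fix a system of parameters $x_1,\dots,x_d$, so that $\sqrt{(\ux)}=\fm$, and let $\check{C}^\bullet$ be the associated \v{C}ech (stable Koszul) complex: a bounded complex of flat $A$-modules (localizations of $A$ and their finite direct sums) concentrated in cohomological degrees $0,\dots,d$, with $H^j_{\fm}(N)\cong H^j(N\otimes_A\check{C}^\bullet)$ naturally in every $A$-module $N$. Applying this to $N=A$ and using that $A$ is Cohen--Macaulay of dimension $d$ (so $H^j_{\fm}(A)=0$ for $j\neq d$) together with the defining hypothesis $H^d_{\fm}(A)=E$, we learn that $\check{C}^\bullet$ has cohomology $E$ in degree $d$ and zero elsewhere. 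Reindexing by $F_i:=\check{C}^{\,d-i}$ therefore exhibits a flat resolution $0\to F_d\to\cdots\to F_0\to E\to 0$ of $E$ of length $d$, independent of $M$.

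Computing $\mathrm{Tor}$ against $E$ with this fixed resolution gives, for every $M$,
\[ \mathrm{Tor}^A_i(M,E)\;\cong\;H_i(M\otimes_A F_\bullet)\;=\;H^{d-i}(M\otimes_A\check{C}^\bullet)\;\cong\;H^{d-i}_{\fm}(M), \]
naturally in $M$. Next I dualize: since $E$ is injective, $(-)^{\lor}=\Hom_A(-,E)$ is exact, so for a projective resolution $P_\bullet\to M$ the termwise tensor--hom adjunction $\Hom_A(P_\bullet\otimes_A E,E)\cong\Hom_A(P_\bullet,\Hom_A(E,E))$ yields, on passing to cohomology,
\[ \bigl(\mathrm{Tor}^A_i(M,E)\bigr)^{\lor}\;\cong\;\mathrm{Ext}^i_A\bigl(M,\Hom_A(E,E)\bigr). \]
Finally, completeness of $A$ supplies the Matlis isomorphism $\Hom_A(E,E)\cong A$, and stringing the three displays together produces the asserted natural isomorphism $\mathrm{Ext}^i_A(M,A)\cong H^{d-i}_{\fm}(M)^{\lor}$ for all $i=0,\dots,d$.

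The one genuinely delicate point is the combination of \emph{naturality} with validity for arbitrary, possibly non-finitely-generated, $M$. This is precisely why I would route everything through the single flat resolution $F_\bullet$ of $E$ (which makes $\mathrm{Tor}^A_i(M,E)\cong H^{d-i}_{\fm}(M)$ manifestly functorial in $M$) and through the exactness of $\Hom_A(-,E)$, rather than through the minimal injective resolution of $A$, whose interaction with $\Hom_A(M,-)$ is hard to control when $M$ is not finitely generated. With this organization each isomorphism above is patently functorial in $M$ and no finiteness hypothesis on $M$ is ever invoked.
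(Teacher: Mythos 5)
Your proposal is correct and follows essentially the same route as the paper: realize $E$ via the \v{C}ech complex as admitting a finite flat resolution (using that $A$ is Cohen--Macaulay with $H^j_{\fm}(A)=0$ for $j\neq d$), deduce $H^{d-i}_{\fm}(M)\cong\mathrm{Tor}_i^A(M,E)$ naturally for all $M$, dualize, and invoke $\Hom_A(E,E)\cong A$ from completeness. The only difference is cosmetic: where the paper cites Huneke's Example~3.6 for the step $\mathrm{Tor}_i^A(M,E)^{\lor}\cong\mathrm{Ext}^i_A(M,E^{\lor})$, you prove it inline via tensor--hom adjunction and exactness of $\Hom_A(-,E)$.
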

\begin{proof}
First, we recall that if $A$ is Gorenstein, then the \v{C}ech complex shifted by $d$ gives a flat resolution of $H^d_{\fm}(A)\cong E$. 
Using this to compute Tor gives isomorphisms $H^{d-i}_{\fm}(M) \cong \mathrm{Tor}_i^A(M,E)$. Applying Matlis duality yields
	\[ H^{d-i}_{\fm}(M)^{\lor} \cong \mathrm{Tor}_i^A(M,E)^{\lor} \cong \mathrm{Ext}^i_A(M,E^{\lor}) \cong \mathrm{Ext}^i_A(M,A), \]
	where the second isomorphism is \cite[Example~3.6]{Hun}.
\end{proof}

\begin{lemma}\label{Annihilator-Hom} Let $(A,\fm_A)\to (T,\fm_T)$ be a local homomorphism of complete local domains. Assume that $A$ is Gorenstein of dimension $d$. Then, \[\Ann_T(H^d_{\fm_A}(T))=\bigcap\limits_{\phi\in \Hom_A(T,A)} \Ker(\phi).\]
\end{lemma}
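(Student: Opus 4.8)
The plan is to reduce the computation of $\Ann_T(H^d_{\fm_A}(T))$ to a statement about homomorphisms into $A$ by applying the local duality isomorphism of Lemma~\ref{local-duality}. Since $A$ is a complete Gorenstein local domain of dimension $d$, that lemma (with $i=0$) gives a natural isomorphism
\[ \Hom_A(T,A) \cong H^d_{\fm_A}(T)^\lor = \Hom_A\bigl(H^d_{\fm_A}(T),\,E\bigr), \]
where $E$ is an injective hull of $k = A/\fm_A$. The key point is that this identification is compatible with the $T$-module structure: $T$ acts on $H^d_{\fm_A}(T)$ and hence on both sides, and the isomorphism is $T$-linear because everything in Lemma~\ref{local-duality} is natural in the $A$-module $M = T$. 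So the first step is to verify (or record) that the local duality isomorphism respects the $T$-action, which lets me transport annihilator computations across it.

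Next I would use the general principle that for any $A$-module $N$, an element $r \in T$ annihilates $N$ if and only if it annihilates its Matlis dual $N^\lor$, because $E$ is a faithful injective cogenerator over the complete local ring $A$ (so $N \mapsto N^\lor$ is a faithful exact functor, and $r\cdot N = 0 \iff r \cdot N^\lor = 0$). Applying this with $N = H^d_{\fm_A}(T)$, I get
\[ \Ann_T\bigl(H^d_{\fm_A}(T)\bigr) = \Ann_T\bigl(H^d_{\fm_A}(T)^\lor\bigr) = \Ann_T\bigl(\Hom_A(T,A)\bigr), \]
where in the last step I invoke the $T$-linear isomorphism from the first paragraph. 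Thus the problem is reduced to identifying the annihilator in $T$ of the $T$-module $\Hom_A(T,A)$, with its natural $T$-action given by $(r\cdot \phi)(x) = \phi(rx)$.

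The final step is the computation $\Ann_T\bigl(\Hom_A(T,A)\bigr) = \bigcap_{\phi} \Ker(\phi)$. For the inclusion $\subseteq$: if $r$ annihilates every $\phi$, then in particular $r\cdot\phi = 0$ means $\phi(rx) = 0$ for all $x \in T$; taking $x = 1$ gives $\phi(r) = 0$, so $r \in \Ker(\phi)$ for every $\phi$. For the reverse inclusion $\supseteq$: suppose $r \in \bigcap_\phi \Ker(\phi)$, so $\phi(r) = 0$ for all $\phi \in \Hom_A(T,A)$. I must show $\phi(rx) = 0$ for all $x\in T$ and all $\phi$. Here I would use that $\phi(rx) = (x\cdot\phi)(r)$ where $x\cdot\phi \in \Hom_A(T,A)$ is again an $A$-linear map (since the $T$-action on $\Hom_A(T,A)$ preserves $A$-linearity); then $(x\cdot\phi)(r) = 0$ precisely because $r$ lies in the kernel of \emph{every} $A$-homomorphism $T \to A$, including $x\cdot\phi$. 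This symmetry between the two inclusions is what makes the statement work cleanly.

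The main obstacle I anticipate is the $T$-linearity claim in the first paragraph: one must check carefully that the chain of natural isomorphisms comprising local duality (Čech-complex flat resolution, the Tor--Ext adjunction, and Matlis duality) carries the $T$-module structure along correctly, rather than just the $A$-module structure. The potential subtlety is that $E$, $A$, and the Ext/Tor groups are built from the $A$-structure, while the $T$-action arises from functoriality in $M = T$; confirming these commute requires tracking the action through each isomorphism in Lemma~\ref{local-duality}. Once that compatibility is in hand, the remaining arguments are the routine, nearly symmetric kernel computations above.
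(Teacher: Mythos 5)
Your proposal is correct and follows essentially the same route as the paper: apply Lemma~\ref{local-duality} with $i=0$ to identify $\Hom_A(T,A)$ with $H^d_{\fm_A}(T)^\lor$, use faithfulness of Matlis duality to equate the two annihilators, and then verify $\Ann_T(\Hom_A(T,A))=\bigcap_\phi \Ker(\phi)$ by the same composition trick $\phi(tt')=(\phi\circ \cdot t')(t)$. The $T$-linearity issue you flag as the main obstacle is handled exactly as you suggest, by the naturality in $M$ already asserted in Lemma~\ref{local-duality}, so it is a point to record rather than a genuine difficulty.
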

\begin{proof}
	By Lemma~\ref{local-duality}, there is an isomorphism $\Hom_A(T,A)\cong H^d_{\fm_A}(T)^\lor$, where $(-)^\lor=\Hom_A(-,E_{A}(A/\fm_A))$ is Matlis duality for $A$-modules.  By faithful exactness of the functor $(-)^\lor$, the map induced by multiplication by $t\in T$ annihilates $\Hom_A(T,A)$ if and only if it annihilates $H^d_{m_A}(T)$. Now, 
	\[\Ann_T(\Hom_A(T,A)) = \{ t\in T  \ | \ \forall \phi\in \mathrm{Hom}_A(T,A) , \, \phi(tT)=0 \}.\] If $\phi(t)=0$ for all $\phi\in \mathrm{Hom}_A(T,A)$, then $(\phi\circ \cdot t')(t)=\phi(t t')=0$ for all $t'\in T$, so $\phi(tT)=0$ as well. The stated equality follows.
\end{proof}

\begin{lemma}\label{reductions} Let  $R$ be a Noetherian domain, $I \subseteq R$ an ideal of $R$ such that 
$\mathrm{cd}(I,R)=\mathrm{ara}(I)$, and denote this value by $c$. Suppose that $H^c_I(R)$ is not faithful.  Then there
is an injective homorphism  $R \to S$,  where $S$ is a complete local domain with algebraically closed residue class field, such
that $H^c_{IS}(S) \neq 0$ and is not faithful over $S$.

Moreover, if  $R$ is equicharacteristic, we may choose a coefficient field $K \inc  S$, we may choose  $f_1, \,\ldots, \, f_c \in IS$
that generate $I$ up to radicals, and we may map the formal power series ring  $A:=K\llbracket x_1, \, \ldots,  \, x_c\rrbracket$ continuously to $S$ 
so that the map on $K$ is its inclusion in $S$ as coefficient field and
$x_i \mapsto f_i$.  This map is automatically injective, and we may identify  $K\llbracket x_1, \, \ldots,  \, x_c\rrbracket$ with its image
$K\llbracket f_1, \, \ldots,  \, f_c\rrbracket \subseteq S$. Once this identification is made, we have that $(x_1, \, \ldots,  \, x_c)S$ is an
ideal of cohomlogical dimension $c$ in $S$, while $H^c_{(x_1, \, \ldots,  \,x_c)}(S)$ is not a faithful  $S$-module.
	 \end{lemma}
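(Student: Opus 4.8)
The plan is to reach $S$ from $R$ through a chain of faithfully flat (hence injective) maps, passing to a complete local domain with algebraically closed residue field only at the very end, all the while keeping $H^c$ nonzero and exhibiting a nonzero annihilator. Since $H^c_I(R)\neq 0$, its support is nonempty and stable under specialization, so it contains a maximal ideal $\fm$; localizing there yields a local domain $(R_\fm,\fm R_\fm)$ with $H^c_{IR_\fm}(R_\fm)\cong H^c_I(R)_\fm\neq 0$, annihilated by the image of a nonzero $r\in\Ann_R H^c_I(R)$ (still nonzero since $R$ is a domain), and $H^j_{IR_\fm}(R_\fm)=0$ for $j>c$, so the cohomological dimension remains $c$. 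I would then complete and apply a gonflement: for $\widehat{R_\fm}$ and the algebraic closure $\bar k$ of its residue field there is a flat local extension $\widehat{R_\fm}\to S_1$ with $S_1$ complete, $\fm_{S_1}=\fm S_1$, and residue field $\bar k$. Both maps are faithfully flat, so flat base change keeps $H^c_{IS_1}(S_1)\neq 0$ annihilated by $r$ and $H^j_{IS_1}(S_1)=0$ for $j>c$.

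Completion and gonflement can destroy the domain property, so I would restore it in one final quotient. By Lemma~\ref{cd-Ass}, $c=\mathrm{cd}(IS_1,S_1)=\max\{\mathrm{cd}(IS_1,S_1/Q) : Q\in\mathrm{Min}(S_1)\}$, so some minimal prime $Q$ has $H^c_{IS_1}(S_1/Q)\neq 0$; put $S:=S_1/Q$, a complete local domain with residue field still $\bar k$. Applying the long exact sequence to $0\to Q\to S_1\to S\to 0$ and using $H^{c+1}_{IS_1}(Q)=0$ (Lemma~\ref{cd-Ass}), the $S_1$-linear map $H^c_{IS_1}(S_1)\to H^c_{IS}(S)$ is surjective, so $r$ annihilates $H^c_{IS}(S)\neq 0$. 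Finally, since flat maps satisfy going-down, $Q$ contracts to a minimal prime of $\widehat{R_\fm}$, which in turn contracts to $(0)$ in the domain $R_\fm$; hence $R\to S$ is injective and the image of $r$ is nonzero. This proves the first assertion, and I regard this juggling of the domain property, the residue field, and the annihilator as the main obstacle, resolved by postponing domain-recovery to a single minimal-prime quotient chosen via Lemma~\ref{cd-Ass} and propagating the annihilator through the right-exactness of top local cohomology.

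In the equicharacteristic case $S$ is a complete equicharacteristic local domain, hence has a coefficient field $K\cong\bar k$ by the Cohen structure theorem. Because $\mathrm{ara}(I)=c$, I may fix $g_1,\dots,g_c\in I$ with $\sqrt{(g_1,\dots,g_c)}=\sqrt{I}$; their images $f_1,\dots,f_c\in IS$ then satisfy $\sqrt{(f_1,\dots,f_c)S}=\sqrt{IS}$, so $H^c_{(f_1,\dots,f_c)}(S)=H^c_{IS}(S)$ and $\mathrm{cd}((f_1,\dots,f_c)S,S)=c$. Since each $f_i\in\fm_S$ and $S$ is complete, there is a continuous $K$-algebra homomorphism $\phi\colon A:=K\llbracket x_1,\dots,x_c\rrbracket\to S$ fixing $K$ and sending $x_i\mapsto f_i$.

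The remaining point is that $\phi$ is injective. If not, then since $A$ is a domain of dimension $c$ its image $\bar A=K\llbracket f_1,\dots,f_c\rrbracket$ has dimension $<c$; but $H^c_{(f_1,\dots,f_c)}(S)$ is the local cohomology of the $\bar A$-module $S$ with support in $(x_1,\dots,x_c)\bar A$, so Lemma~\ref{cd-Ass} together with Grothendieck's vanishing theorem gives $\mathrm{cd}((x_1,\dots,x_c)\bar A,S)\le\mathrm{cd}((x_1,\dots,x_c)\bar A,\bar A)\le\dim\bar A<c$, contradicting $H^c_{(f_1,\dots,f_c)}(S)\neq 0$. Hence $\phi$ is injective; identifying $A$ with $K\llbracket f_1,\dots,f_c\rrbracket\subseteq S$, the ideal $(x_1,\dots,x_c)S=(f_1,\dots,f_c)S$ has cohomological dimension $c$ and $H^c_{(x_1,\dots,x_c)}(S)=H^c_{IS}(S)$ is non-faithful over $S$, as desired.
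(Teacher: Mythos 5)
Your proof is correct and takes essentially the same route as the paper: pass through faithfully flat extensions (localization, completion, residue-field extension), use Lemma~\ref{cd-Ass} to find a minimal prime whose quotient is a complete local domain still carrying a nonzero, non-faithful $H^c$, and then prove injectivity of the power series map $K\llbracket x_1,\dots,x_c\rrbracket\to S$ by the same dimension count via Lemma~\ref{cd-Ass}. The only differences are organizational and harmless: you perform the residue-field extension before restoring the domain property (so you quotient by a minimal prime once rather than twice), and you check injectivity of $R\to S$ via going-down for flat maps rather than the paper's nonzerodivisor/associated-prime argument.
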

\begin{proof}
  Suppose that there is some ideal $I=(f_1,\dots,f_c)$ such that $H:=H^{c}_I(R)\neq 0$, and there is some $x\neq 0$ such that $x H=0$. We can localize at a minimal prime ideal in the support of $H$, which necessarily contains $x$, to obtain a local choice of $S$ with
  $R \subseteq S$. 
 We can then complete at the maximal ideal of $S$: by faithful flatness, we have $H^c_{I\widehat{S}}(\widehat{S})\cong H^c_I(\widehat{S}) \cong H^c_I(R)\otimes_S \widehat{S}\neq 0$, and the image of $x$ annihilates this module. Note also that since all elements of $S$ are nonzerodivisors on $\widehat{S}$, and so do not lie in any of its associated primes.  The completion $\widehat{S}$ might no longer be a domain, but by Lemma~\ref{cd-Ass} above, for some $Q\in \mathrm{Min}(\widehat{S})$, we have $H^c_I(\widehat{S}/Q)\neq 0$, $R$ injects into $S$,  which injects into   $\widehat{S}/Q$, and $x$ annihilates $H^c_I(\widehat{S}/Q)$.  Consequently, $\widehat{S}/Q$
 is a new choice of $S$ that is a complete local domain. 

Now assume that $S$ is a complete local domain.  Fix a coefficient field for $S$, which we will denote by the same letter $K$ as the 
residue field. We now want to reduce to the case where 
$K$ is algebraically closed. We can take a faithfully flat local extension of $R$ with residue field $\overline{K}$; this extension again may not be a domain, but we may pass to the quotient by an associated prime and still have a counterexample, by the same argument as above.  Consequently, we have a choice of $S$ that is a complete local domain with algebraically closed residue field $K$.
 
 Let $(A,\fm_A,K)=(K\llbracket x_1,\dots, x_c\rrbracket,(x_1,\dots,x_c),K)$ be a power series ring over $K$, and consider the map 
 $\varphi:A\to R$ described in the statement of the lemma. The hypothesis on $H$ implies that $\varphi$ is injective: otherwise, $\varphi$ would factor through a local ring $(\overline{A},\fm_{\overline{A}})$ of dimension less than $c$, and, by Lemma~\ref{cd-Ass}, \[\cdim(I,R)=\cdim(\fm_{\overline{A}}R,R)=\cdim(\fm_{\overline{A}}, R) \leq \cdim(\fm_{\overline{A}},\overline{A})=\dim(\overline{A})<c.\]
 We may therefore identify the power series ring $A$ with its image in $R$.\end{proof}
 
 We are now ready to prove one of our main results. We refer the reader to  \cite{Ho} for basic properties of solid algebras over a domain.
 
 \begin{theorem}\label{maintheorem}
		 Let $R$ be a Noetherian domain of characteristic $p>0$. Let $I$ be an ideal of $R$ such that $\mathrm{cd}(I,R)=\mathrm{ara}(I)$, and denote this value by $c$. Then, $\Ann_R(H^{c}_I(R))=0$. 
	\end{theorem}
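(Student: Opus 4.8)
The plan is to argue by contradiction, using Lemma~\ref{reductions} to pass to a power-series setup and then forcing a contradiction out of the characteristic-$p$ solidity of a single localization. So suppose the conclusion fails, i.e. $H^c_I(R)$ is not faithful. Since $\ch(R)=p>0$ the ring is equicharacteristic, so the second paragraph of Lemma~\ref{reductions} applies: I would replace $R$ by a complete local domain $S$ with algebraically closed residue field $K$, together with an injection of $A:=K\llbracket x_1,\dots,x_c\rrbracket$ onto the subring $K\llbracket f_1,\dots,f_c\rrbracket\subseteq S$, where $f_1,\dots,f_c$ generate $I$ up to radicals and $x_i\mapsto f_i$. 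Writing $\fm_A=(x_1,\dots,x_c)$, the lemma gives directly that $M:=H^c_{\fm_A}(S)$ is nonzero but not faithful over $S$. I then fix $t\in S$ with $t\neq 0$ and $tM=0$.

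The heart of the matter is to localize at $t$. Because $tM=0$, localizing $M$ at the powers of $t$ kills it, so $M_t=0$; and since the \v{C}ech complex on $x_1,\dots,x_c$ localizes and cohomology commutes with the flat map $S\to S_t$, this says exactly that $H^c_{\fm_A}(S_t)=0$. Now $A$ is regular, hence Gorenstein, of dimension $c$, so Lemma~\ref{local-duality} applies to the (non-finitely-generated) $A$-module $S_t$ and yields a natural isomorphism $\Hom_A(S_t,A)\cong H^c_{\fm_A}(S_t)^{\lor}=0$. In other words, $S_t$ fails to be a solid $A$-algebra.

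This is the contradiction I am after. The ring $S_t=S[1/t]$ is a localization of the domain $S$, hence again a domain, and since $S$ is a domain with $t\neq 0$ the composite $A\hookrightarrow S\hookrightarrow S_t$ is injective; thus $S_t$ is a domain containing the complete local domain $A$ of characteristic $p$. By the characteristic-$p$ solidity theorem (see \cite{Ho}), every such domain extension is solid, so $\Hom_A(S_t,A)\neq 0$, contradicting the previous paragraph. Hence no nonzero $t$ annihilates $M$ after reduction, and therefore $\Ann_R(H^c_I(R))=0$.

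The reduction (Lemma~\ref{reductions}), the interchange of localization with local cohomology, and the duality bookkeeping (Lemma~\ref{local-duality}) are all routine; the single essential input is the solidity of the domain extension $A\hookrightarrow S_t$, and this is the only place where positive characteristic enters, which is precisely why $\ch(R)=p>0$ cannot be dropped. The one point I expect to require care in writing up is verifying that the solidity result of \cite{Ho} is being invoked in the correct generality, namely for the not-necessarily-module-finite domain extension $A\hookrightarrow S_t$ rather than only for module-finite extensions.
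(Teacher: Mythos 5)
Your reduction steps are fine and are consistent with the paper's own toolkit: Lemma~\ref{reductions} produces $A=K\llbracket x_1,\dots,x_c\rrbracket\subseteq S$ with $M:=H^c_{\fm_A}(S)\neq 0$ killed by some $t\neq 0$; then $tM=0$ indeed forces $M_t=H^c_{\fm_A}(S_t)=0$, and Lemma~\ref{local-duality} (which does apply to arbitrary $A$-modules) gives $\Hom_A(S_t,A)=0$, i.e., $S_t$ is not a solid $A$-algebra. The gap is the final step: there is no ``characteristic-$p$ solidity theorem'' asserting that every domain extension of a complete local domain of characteristic $p$ is solid, and that assertion is false. A counterexample has exactly the shape you need: take $A=S=K\llbracket x\rrbracket$ and $t=x$, so that $S_t=K((x))$ is a domain extension of the complete local characteristic-$p$ domain $A$; yet $\Hom_A(K((x)),A)=0$, since any $A$-linear $\phi$ satisfies $\phi(s)=x^n\phi(s/x^n)\in x^nA$ for every $n$, whence $\phi(s)\in\bigcap_n x^nA=0$. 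What \cite{Ho} actually provides is solidity of module-finite extension domains (in any characteristic) and of big Cohen--Macaulay algebras such as $R^+$ over complete local domains in characteristic $p$; neither applies to the localization $S_t$. Worse, the statement you need --- that $S_t$ is $A$-solid, equivalently $H^c_{\fm_A}(S_t)\neq 0$, for every nonzero $t\in S$ --- implies the theorem being proved (since $tM=0$ gives $M_t=0$), so it cannot be taken as an input; you have reduced the theorem to a strictly stronger unproven claim, justified by a false citation.

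For contrast, the paper closes this step by a genuinely different mechanism, which shows where the real characteristic-$p$ input (Frobenius, not blanket solidity) enters. By Lemma~\ref{Annihilator-Hom}, the annihilator $J$ of $H^c_{\fm_A}(S)$ is the intersection of the kernels of all $A$-linear maps $S\to A$, and, replacing $A$ by $A^q$ for $q=p^e$, also of all $A^q$-linear maps $S\to A^q$. Assuming $J\neq 0$, one picks a minimal prime $P$ of $J$, chooses $h$ with $P^{(h)}\subseteq JS_P\cap S$, an element $u\in J\smallsetminus P^{(h)}$, and $q$ with $P^{[q]}\subseteq P^{(h)}$; then F-finiteness of $S$ and torsion-freeness of $S/P^{(h)}$ as a $(S/P)^q$-module yield an embedding of $S/P^{(h)}$ into a finite free $(S/P)^q$-module, and composing with an $A^q$-linear embedding of $(S/P)^q$ into a product of copies of $A^q$ produces an $A^q$-linear map $S\to A^q$ that does not kill $u$ --- contradicting $u\in J$. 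If you want to salvage your outline, you must replace the solidity claim for $S_t$ by an argument of this Frobenius/symbolic-power type; localization alone will not do it.
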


\begin{proof} As in the conclusion of Lemma~\ref{reductions}, we may assume that $R$ is a complete local domain with algebraically closed residue field $K$, and that there is a power series subring $A=K\llbracket x_1,\dots,x_c\rrbracket \subseteq R$ such that $I=\fm_A R$.

Since $H^c_{\fm_A}(R)\neq 0$, it follows from Lemma~\ref{Annihilator-Hom} that $\Hom_A(R,A)\neq 0$; i.e., $R$ is a solid $A$-algebra. Let $J$ be the annihilator of $H^c_{I}(R)$. We want to show that $J = 0$; suppose otherwise, to obtain a contradiction.
 
By definition of $J$, the intersection of the kernels of the $A$-linear maps from $R/J$ to $A$ is trivial, so there is an $A$-linear embedding
\[ R/J \hookrightarrow \!\!\!\! \prod\limits_{\phi\in \Hom_A(R/J,A)} \!\!\!\! A \qquad \qquad r \mapsto (\phi(r))_{\phi}. \]
 Let $P$ be a minimal prime of $J$.  Then $R/P$ embeds in
$R/J$ as an $R$-module, and so there is an $A$-linear embedding of $R/P$ into a product of copies of $A$.

 If $q = p^e$ and we replace  $A$ by $A^q$,  the inclusion $A^{q} \to R$ again satisfies the hypotheses of Lemma~\ref{Annihilator-Hom}, so
\[J=\bigcap\limits_{\phi\in \Hom_A(R,A)} \Ker(\phi) = \bigcap\limits_{\psi\in \Hom_{A^q}(R,A^q)} \Ker(\psi). \]
In particular, there is an $A^q$-linear embedding of $R/P \hookrightarrow \prod A^q$ into a product of copies of~$A^q$.

Let  $J'= J R_P \cap R$ be the $P$-primary component of $J$.  Choose $h$ such that $(P R_P)^h \subseteq J R_P$, so $P^{(h)}$ is properly contained in $J'$.  Then $P^{(h)}$ cannot contain $J$, since this would yield a contradiction after localizing at $P$.   Choose $u \in J \smallsetminus P^{(h)}$. 

Choose $q = p^e$ so that  $P^{[q]} \inc P^{(h)}$.  Consider $R/P^{(h)}$ as an $R^q$ module.  Then the elements of the
prime ideal $P^q$ of $R^q$ (i.e., the set of $q\,$th powers of elements in $P$) annihilate $R/P^{(h)}$, and so $R/P^{(h)}$ may be viewed as an  $R^q/P^q\cong (R/P)^q$-module. In fact, $R/P^{(h)}$ is a torsion-free $(R/P)^q$-module: if $\overline{r^q} \in (R/P)^q$, $\overline{s}\in R/P^{(h)}$, and $\overline{r^q} \cdot \overline{s} =0$, then $r^q s \in P^{(h)}$ in $R$, so either $s\in P^{(h)}$ (so that $\overline{s}=0$), or else $r^q\in P$ (so that $\overline{r^q}=0$ is zero) by primariness of $P^{(h)}$. Since $R$ is complete local with an algebraically closed residue field, it is F-finite, and the images of a finite generating set for $R$ as an $R^q$-module yield a finite generating set for $R/P^{(h)}$ as an $(R/P)^q$-module.

Hence,  $R/P^{(h)}$  embeds $(R/P)^q$-linearly
in a finitely generated free $(R/P)^q$-module.  Consequently, the image $v$ of $u$ in $R/P^{(h)}$ has nonzero coordinate projection
in some copy of $(R/P)^q$.  The composition  $R \surj R/P^{(h)} \to (R/P)^q$ gives an $A^q$-linear map such that the
image of $u$ is not 0.  Since $(R/P)^q$ embeds in a product of copies of $A^q$,  further composition gives an $A^q$-linear
map $R \to A^q$ such that the image of $u \in J $ is nonzero.  This is a contradiction. 
\end{proof}

\begin{corollary}\label{maincor}
Let $R$ be a Noetherian ring of characteristic $p>0$, and $I$ an ideal for which $\mathrm{cd}(I,R)=\mathrm{ara}(I)=c$. Then $\Ann_R(H^c_I(R))$ has height zero.
\end{corollary}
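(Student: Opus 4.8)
The plan is to deduce Corollary~\ref{maincor} from Theorem~\ref{maintheorem} by reducing to the domain case, using exactly the same mechanism that established the equivalence of the two forms of Question~\ref{mainqu} in Proposition~\ref{prop-equiv}. The key observation is that both hypotheses, $\cdim(I,R)=c$ and $\ara(I)=c$, together with the characteristic-$p$ assumption, are inherited by passing to a suitable quotient $R/Q$ by a minimal prime.

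First I would invoke Lemma~\ref{cd-Ass}: since $c=\cdim(I,R)=\max\{\cdim(I,R/Q)\mid Q\in\mathrm{Min}(R)\}$, there exists a minimal prime $Q$ of $R$ with $\cdim(I,R/Q)=c$. Next I would check that the hypotheses of Theorem~\ref{maintheorem} transfer to the domain $R/Q$: it is a Noetherian domain of characteristic $p>0$, and writing $\bar I=I(R/Q)$ we have $\ara(\bar I)\le\ara(I)=c$ because the images of $c$ generators of $I$ up to radical still generate $\bar I$ up to radical; combined with $\cdim(\bar I,R/Q)=c\le\ara(\bar I)$ this forces $\cdim(\bar I,R/Q)=\ara(\bar I)=c$. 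Theorem~\ref{maintheorem} then gives $\Ann_{R/Q}(H^c_{\bar I}(R/Q))=0$.

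Now I would translate this back to $R$. By the base change isomorphism $H^c_I(R)\otimes_R R/Q\cong H^c_{\bar I}(R/Q)$, any $r\in\Ann_R(H^c_I(R))$ has image in $R/Q$ annihilating $H^c_{\bar I}(R/Q)$, hence that image is zero by the previous step; therefore $\Ann_R(H^c_I(R))\subseteq Q$. Since $Q$ is a minimal prime of $R$, it has height zero, and consequently $\Ann_R(H^c_I(R))$ has height zero as claimed.

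The only point requiring a little care is verifying that the arithmetic-rank hypothesis descends: one must confirm that $\cdim(\bar I, R/Q)=c$ does not exceed $\ara(\bar I)$, which holds because cohomological dimension is always bounded above by arithmetic rank and the generators-up-to-radical of $I$ descend to $R/Q$, pinning $\ara(\bar I)$ between $c$ and $\ara(I)=c$. This is the main (and essentially only) obstacle, and it is routine; everything else is a direct application of the base-change argument already used in Proposition~\ref{prop-equiv}.
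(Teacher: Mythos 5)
Your proof is correct and follows essentially the same route as the paper: the paper's own proof is simply that the statement is immediate from Theorem~\ref{maintheorem} and Proposition~\ref{prop-equiv}, and the proof of that proposition is exactly your minimal-prime base-change argument (choose $Q\in\mathrm{Min}(R)$ with $\cdim(I,R/Q)=c$ via Lemma~\ref{cd-Ass}, use $H^c_I(R)\otimes_R R/Q\cong H^c_{I(R/Q)}(R/Q)$ to conclude $\Ann_R(H^c_I(R))\subseteq Q$). The only thing you add is the explicit verification that the hypothesis $\cdim=\ara$ descends to $R/Q$, namely $c=\cdim(I(R/Q),R/Q)\le\ara(I(R/Q))\le\ara(I)=c$; this check is glossed over by the paper's citation (Proposition~\ref{prop-equiv} as stated concerns the full Question~\ref{mainqu}, not the restricted class covered by Theorem~\ref{maintheorem}), so making it explicit is a genuine, if routine, improvement in rigor rather than a different approach.
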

\begin{proof}
This is immediate from Theorem~\ref{maintheorem} and Proposition~\ref{prop-equiv}.
\end{proof}

We do not know whether the analogues of Theorem~\ref{maintheorem} and Corollary~\ref{maincor} hold in equal characteristic zero.
By extending results of Huneke, Katz, and Marley \cite{HuKM}, we may reduce this to the case of ideals with at most three generators.
This is immediate from:

\begin{proposition} A local cohomology module  $H^n_I(M)$ of a module $M$ over a Noetherian ring $R$ with support in 
an $n$-generated ideal for
$n \geq 4$ is isomorphic with a local cohomology module $H^3_J(M)$ where $J$ has at most three
generators.  
\end{proposition}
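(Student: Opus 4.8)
The plan is to peel off generators one at a time, reducing an $m$-generated top local cohomology module to an $(m-1)$-generated one while leaving $M$ unchanged, and to iterate from $m=n$ down to $m=3$. The stopping value $3$ is not imposed by hand: it is forced by the arithmetic of the single reduction step, which needs $m\geq 4$.

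For the step, assume $I=(\vect{f}{m})$ with $m\geq 4$ and split it as $I=\mathfrak{a}+\mathfrak{b}$ with $\mathfrak{a}=(f_1,f_2)$ and $\mathfrak{b}=(\ve{f}{3}{m})$. I would run the Mayer--Vietoris sequence
\[ \cdots\to H^{i}_{\mathfrak{a}}(M)\oplus H^{i}_{\mathfrak{b}}(M)\to H^{i}_{\mathfrak{a}\cap\mathfrak{b}}(M)\xrightarrow{\ \delta\ }H^{i+1}_{I}(M)\to H^{i+1}_{\mathfrak{a}}(M)\oplus H^{i+1}_{\mathfrak{b}}(M)\to\cdots \]
in degrees $i=m-1$ and $i=m$. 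The \v{C}ech complex on two elements shows $\cdim(\mathfrak{a},M)\leq 2$, and the one on $m-2$ elements shows $\cdim(\mathfrak{b},M)\leq m-2$, for every $M$. As $m\geq 4$ gives $2\leq m-2<m-1$, all of $H^{m-1}_{\mathfrak{a}}(M),H^{m-1}_{\mathfrak{b}}(M),H^{m}_{\mathfrak{a}}(M),H^{m}_{\mathfrak{b}}(M)$ vanish, so $\delta$ is an isomorphism $H^{m-1}_{\mathfrak{a}\cap\mathfrak{b}}(M)\cong H^{m}_{I}(M)$. This is the only place the bound $m\geq 4$ enters, and it explains the floor: for $m=3$ the two-generated piece would have to satisfy $\cdim\leq 1$, which it need not.

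Next I would replace $\mathfrak{a}\cap\mathfrak{b}$ by an ideal generated by $m-1$ elements with the same radical, which does not change local cohomology. Since $\sqrt{\mathfrak{a}\cap\mathfrak{b}}=\sqrt{\mathfrak{a}\mathfrak{b}}$, writing $\mathfrak{a}=(a_1,a_2)$, $\mathfrak{b}=(\vect{b}{m-2})$, I set
\[ c_k=\sum_{i+j=k+1}a_i b_j\qquad(k=1,\dots,m-1), \]
the coefficients of $\bigl(\sum_i a_i t^{\,i-1}\bigr)\bigl(\sum_j b_j t^{\,j-1}\bigr)$. If all $c_k$ vanish at a point, then this product of one-variable polynomials vanishes, hence one factor does, i.e. all $a_i$ or all $b_j$ vanish there; therefore $V(\ve{c}{1}{m-1})=V(\mathfrak{a})\cup V(\mathfrak{b})$ and $\sqrt{(\ve{c}{1}{m-1})}=\sqrt{\mathfrak{a}\cap\mathfrak{b}}$. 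Thus $H^{m}_{I}(M)\cong H^{m-1}_{(c_1,\dots,c_{m-1})}(M)$, again a top local cohomology of the \emph{same} $M$, now for an $(m-1)$-generated ideal.

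Applying this step repeatedly while the number of generators is at least $4$ yields isomorphisms along $n\to n-1\to\cdots\to 3$ and a three-generated ideal $J$ with $H^{n}_{I}(M)\cong H^{3}_{J}(M)$. The routine parts are the vanishing bookkeeping in Mayer--Vietoris; the one substantive ingredient is the bound $\ara(\mathfrak{a}\cap\mathfrak{b})\leq\ara(\mathfrak{a})+\ara(\mathfrak{b})-1$ coming from the $c_k$, and I expect the only thing needing care in the write-up is that each reduction produces an ideal depending only on the chosen generators (not on $M$), so that the steps compose into a single isomorphism.
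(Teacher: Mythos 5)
Your proof is correct, and it runs on the same two gears as the paper's: a Mayer--Vietoris sequence whose four flanking terms vanish because each of the two ideals in the decomposition has too few generators, plus the trick of generating the intersection of two ideals, up to radical, by the coefficients of a product of polynomials. The difference is in the decomposition and in how the radical step is justified. The paper touches only four generators at a time: it sets $I_1=(u,v)+\fA$ and $I_2=(x,y)+\fA$, where $\fA$ is generated by the remaining $n-4$ generators, so that both ideals have $n-2$ generators, and it replaces $I_1\cap I_2$ up to radical by $J=(xu,\,yv,\,xv+yu)+\fA$, proving the radical equality by integral dependence: $xv$ and $yu$ are roots of $z^2-(xv+yu)z+(xv)(yu)=0$, hence lie in $\sqrt{J}$. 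You instead split asymmetrically into $\mathfrak{a}=(f_1,f_2)$ and $\mathfrak{b}=(f_3,\dots,f_m)$ with nothing carried along, and apply the coefficient trick to all of $\mathfrak{b}$ at once, justifying the radical equality by reducing modulo a prime $P$ and using that $(R/P)[t]$ is a domain --- which is the rigorous content your phrase ``vanish at a point'' needs, since $R$ is an arbitrary Noetherian ring rather than an affine algebra over an algebraically closed field. The two tricks are in fact literally the same one: the paper's three elements $xu,\,yv,\,xv+yu$ are exactly your $c_k$ for the product $(u+vt)(x+yt)$. What the paper's bookkeeping buys is that the $n-4$ untouched generators pass verbatim into the new ideal; what yours buys is the general bound $\ara(\mathfrak{a}\cap\mathfrak{b})\le\ara(\mathfrak{a})+\ara(\mathfrak{b})-1$, stated once and iterated uniformly. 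Both versions need $m\ge 4$ for the step, for exactly the reason you identify, and both terminate at three generators.
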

\begin{proof} It suffices to show that if $n  \geq  4$, we can reduce the number of generators and the cohomological index by 1.  Let $u,\,v,\,x,\,y$ be four
of the generators and $\fA$ the ideal generated by the $n-4$ remaining generators of $I$.  Let $J = (xu, yv, xv+yu, \fA)R$. It
suffices to show that $H^{n-1}_J(M) = H^{n}_I(M)$.  Note that  $xv, yu$ are roots of  
\[ z^2 -(xv+yu)z + (xv)(yu)  = 0 \]
and so are integral over $J$ and in its radical.  It follows that $(u,v)(R/\fA) \cap (x,y)(R/\fA)$, which has the same radical
as $(u,v)(x,y)(R/\fA)$, also has the same radical as $(xu, yv, xv+yu)(R/\fA)$. Hence, up to radicals,  $J$ is the intersection of $I_1 = (u,v)R + \fA$
and  $I_2 = (x,y)R + \fA$.  The Mayer-Vietoris sequence for local cohomology then yields
\begin{align*} \cdots \longrightarrow \  H^{n-1}_{I_1}(M)\,\oplus\, &H^{n-1}_{I_2}(M)  \ \longrightarrow  \ H^{n-1}_J(M) \\
 &\longrightarrow H^n_{I_1+I_2}(M) \  \longrightarrow \ H^n_{I_1}(M)\,\oplus\, H^n_{I_2}(M) \ \longrightarrow \cdots
\end{align*}
and the result follows because $I_1+I_2 = I$ and the first and last of the four terms shown are 0, since $I_1,\,I_2$ both have only $n-2$ generators.
\end{proof}

We now establish another case where Question~\ref{mainqu} has an affirmative answer.

\begin{theorem} Let $R$ be a pure $R $-submodule of a domain $S$ that is an $R$-algebra. 
Let $I$ be an ideal of $R$, and $c=\mathrm{cd}(I,R)$.
If $\mathrm{Ann}_R(H^c_{IS}(S))=0$, and, in particular, if  $\mathrm{Ann}_S(H^c_{IS}(S))=0$ 
then $\mathrm{Ann}_R(H^c_I(R))=0$.

In particular, the conclusion holds when $S$ is a regular 
domain containing a field and $R$ is pure in $S$.
\end{theorem}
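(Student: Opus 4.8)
The plan is to reduce everything to a single base-change isomorphism, namely $H^c_{IS}(S)\cong H^c_I(R)\otimes_R S$ as $S$-modules. Although $S$ need not be flat over $R$, the point is that the \emph{top} local cohomology functor $H^c_I(-)$ is right exact on all of $R$-Mod, and right exactness is exactly what is needed to produce this isomorphism. Once it is in hand, the stated implication is a short annihilator chase, and purity will enter only in the final assertion, to guarantee that $H^c_{IS}(S)$ does not vanish.

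First I would check that $H^c_I(-)$ is right exact. By the first equality of Lemma~\ref{cd-Ass}, $c=\cdim(I,R)=\max\{\cdim(I,M)\}$ taken over all $R$-modules $M$, so $H^{c+1}_I(M)=0$ for every $M$. For any short exact sequence $0\to M'\to M\to M''\to 0$ the long exact sequence of local cohomology then ends in $H^c_I(M)\to H^c_I(M'')\to H^{c+1}_I(M')=0$, which is right exactness. Since $H^i_I(-)=\varinjlim_n\mathrm{Ext}^i_R(R/I^n,-)$ with each $R/I^n$ finitely presented over the Noetherian ring $R$, the functor also commutes with arbitrary direct sums. Applying the right-exact, coproduct-preserving functor $H^c_I(-)$ to a free presentation $F_1\to F_0\to S\to 0$ and using $H^c_I(R^{(\Lambda)})\cong H^c_I(R)\otimes_R R^{(\Lambda)}$ gives $H^c_I(S)\cong H^c_I(R)\otimes_R S$; this is the Eilenberg--Watts description of $H^c_I(-)$. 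Because the \v{C}ech complex on a generating set of $I$ computes local cohomology through the chosen elements and the module alone, $H^c_I(S)$ coincides with $H^c_{IS}(S)$, and the isomorphism is $S$-linear by naturality against the multiplications $S\to S$. Thus $H^c_{IS}(S)\cong H^c_I(R)\otimes_R S$ over $S$.

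With this isomorphism the main implication is immediate, and in fact needs neither purity nor that $S$ is a domain: if $r\in\Ann_R(H^c_I(R))$ then multiplication by $r$ kills $H^c_I(R)$, hence kills $H^c_I(R)\otimes_R S\cong H^c_{IS}(S)$, so $r\in\Ann_R(H^c_{IS}(S))=0$ and $r=0$. For the parenthetical hypothesis, $\Ann_R(H^c_{IS}(S))=\Ann_S(H^c_{IS}(S))\cap R$, so $\Ann_S(H^c_{IS}(S))=0$ forces $\Ann_R(H^c_{IS}(S))=0$ and we are reduced to the previous case. Purity of $R\hookrightarrow S$ is what supplies nonvanishing: it makes $H^c_I(R)\to H^c_I(R)\otimes_R S\cong H^c_{IS}(S)$ injective, and since $H^c_I(R)\neq 0$ we conclude $H^c_{IS}(S)\neq 0$, i.e.\ the cohomological dimension does not drop. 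When $S$ is a regular domain containing a field, every nonzero local cohomology module of $S$ is faithful (as recalled in the introduction), so $\Ann_S(H^c_{IS}(S))=0$, and the final assertion follows.

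The one step carrying real content is the global right exactness of $H^c_I(-)$ and the resulting base-change isomorphism for a possibly non-flat $S$: where one would normally invoke flatness, the substitute here is the vanishing $H^{c+1}_I\equiv 0$ furnished by Lemma~\ref{cd-Ass}. After that identification, the annihilator bookkeeping and the limited role of purity---solely to keep $H^c_{IS}(S)$ nonzero so that faithfulness over regular $S$ can be applied---are routine.
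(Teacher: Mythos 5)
Your proposal is correct and takes essentially the same approach as the paper: both arguments rest on the base-change identification $H^c_{IS}(S)\cong H^c_I(R)\otimes_R S$, use purity only to guarantee $H^c_{IS}(S)\neq 0$, and invoke faithfulness of nonzero local cohomology over regular rings containing a field for the final assertion. The only substantive difference is that you supply the justification (right exactness of $H^c_I(-)$ from Lemma~\ref{cd-Ass} plus the Eilenberg--Watts argument) for the isomorphism that the paper asserts without proof.
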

\begin{proof} Since $R \to S$ is pure, it remains injective when we tensor over $R$ with $H_I^c(R)$, and so we
have an injection $H^c_I(R) \to H^c_I(S) \cong H^c_{IS}(S)$; therefore, the latter is nonzero.  
But any nonzero $r \in R$ that kills $H^c_I(R)$ will
also kill the nonzero module $H^c_I(S) \cong H^c_I(R) \otimes_R S$, contradicting the hypothesis.  

It remains to justify the statements made in the regular case. 
	If $K$ has characteristic  $p>0$, then the theorem then follows from \cite[Lemma~2.2]{HuKo}.
If $K$ has characteristic~$0$, we may use the fact that every nonzero local cohomology module with coefficients in $S$ is faithful \cite[Corollary~3.6]{Lyu1}.  

Alternatively, both cases follow from the basic theorems of Lyubeznik \cite{Lyu1, Lyu2}:   if  $S$ is regular and contains a field, we may
see that any nonzero local cohomology module $M= H^i_J(S)$ of $S$ is faithful as follows.
Localize at a minimal prime of the support of $M$, which produces a nonzero local cohomology module supported only at the
maximal ideal of an equicharacteristic regular local ring, and so isomorphic with a nonzero finite direct 
sum of copies of the injective hull of the residue field, and, consequently, faithful over the localization of $S$ and therefore
over $S$.
\end{proof}

\section{Persistence of cohomological dimension}\label{perscd}

As a consequence of Lemma~\ref{cd-Ass}, if $R\to S$ is a ring homomorphism, and $I$ is an ideal of $R$, then 
$\cdim(I,R)\geq \cdim(IS,S)$. It is easy to see that equality holds if $R$ is a direct summand of $S$ as an $R$-module, but the inequality is strict in general. Likewise, the arithmetic rank of an ideal can decrease when passing to a larger algebra. We pose the following conjecture.

\begin{conjecture}\label{conjecture-persist-cd}
	If $R$ is a Noetherian domain, and $S$ is a solid $R$-algebra, then for every ideal $I$ of $R$, we have $\mathrm{cd}(I,R)=\mathrm{cd}(IS,S)$.
\end{conjecture}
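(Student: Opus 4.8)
The plan is to reduce the conjecture to the faithfulness Question~\ref{mainqu}(\ref{qu-a}) by a functoriality argument built from the solid structure of $S$. The inequality $\cdim(I,R)\geq \cdim(IS,S)$ is already recorded above (it follows from the first equality of Lemma~\ref{cd-Ass}, applied to $S$ viewed as an $R$-module, together with the identification $H^n_{IS}(S)\cong H^n_I(S)$), so it suffices to prove the reverse inequality. Writing $c=\cdim(I,R)$, we have $H^c_I(R)\neq 0$, and the goal becomes to show $H^c_{IS}(S)\cong H^c_I(S)\neq 0$.

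Next I would use solidity to manufacture a good $R$-linear map back to $R$. By hypothesis $\Hom_R(S,R)\neq 0$, so there exist $\phi\in \Hom_R(S,R)$ and $s\in S$ with $\phi(s)\neq 0$; equivalently, the trace ideal $\tau=\sum_{\phi}\phi(S)$ is a nonzero ideal of the domain $R$. Precomposing $\phi$ with multiplication by $s$ (which is $S$-linear, hence $R$-linear) produces $\psi\in \Hom_R(S,R)$ with $\psi(1)=\phi(s)=:a\neq 0$. For the structure map $\iota\colon R\to S$, the composite $\psi\circ\iota\colon R\to R$ is then multiplication by $a$.

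Applying the functor $H^c_I(-)$ on $R$-modules gives $\iota_*\colon H^c_I(R)\to H^c_I(S)$ and $\psi_*\colon H^c_I(S)\to H^c_I(R)$ whose composite $\psi_*\circ\iota_*$ is multiplication by $a$ on $H^c_I(R)$. If $a\notin \Ann_R(H^c_I(R))$, this composite is a nonzero endomorphism of the nonzero module $H^c_I(R)$, forcing $\iota_*\neq 0$ and hence $H^c_I(S)\neq 0$. Thus persistence holds as soon as the pair $(\phi,s)$ can be chosen so that $a=\phi(s)$ avoids the annihilator, i.e.\ as soon as $\tau\not\subseteq \Ann_R(H^c_I(R))$. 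When $H^c_I(R)$ is faithful this is automatic, since $\tau\neq 0$; so an affirmative answer to Question~\ref{mainqu}(\ref{qu-a}) yields the conjecture, and in particular the two cases settled in \S\ref{main}---characteristic $p$ with $\cdim(I,R)=\ara(I)$, and $R$ pure in a regular ring containing a field---give unconditional persistence. This reduction is exactly the content I would record as Corollary~\ref{pers.cor}. I note that the argument is characteristic-free and does not require $R$ to contain a field: only the domain hypothesis and the faithfulness of $H^c_I(R)$ enter.

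The main obstacle is therefore the faithfulness statement itself. The displayed mechanism shows that $\tau\not\subseteq \Ann_R(H^c_I(R))$ is sufficient, and faithfulness ($\Ann_R(H^c_I(R))=0$) is the clean hypothesis guaranteeing this for every solid $S$ simultaneously; consequently the conjecture in full generality is at least as hard as Question~\ref{mainqu}, which is open in equal characteristic zero and, more broadly, whenever $\cdim(I,R)>\ara(I)$ in positive characteristic. A genuinely new idea beyond this solid-trace mechanism would be needed only if one hoped to establish persistence without first proving faithfulness. Finally, I would check that the argument survives the (possibly non-Noetherian) solid algebras $S$ allowed by the statement: the \v{C}ech description of $H^c_I(S)\cong H^c_{IS}(S)$, the standing inequality via Lemma~\ref{cd-Ass}, and the functoriality of $H^c_I(-)$ on arbitrary $R$-modules all use only the Noetherian hypothesis on $R$, so no difficulty arises from the size of $S$.
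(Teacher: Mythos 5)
Your reduction is correct and is essentially identical to the paper's own treatment: the paper proves the conjecture conditionally by the same solid-trace mechanism (precompose a nonzero $\phi\in\Hom_R(S,R)$ with multiplication by $s$ so that the composite $R\to S\to R$ is multiplication by a nonzero $r$, then apply $H^c_I(-)$ to see that this multiplication map factors through $H^c_{IS}(S)$, which is nonzero whenever $r$ does not annihilate the faithful module $H^c_I(R)$), and then records the unconditional cases via Theorem~\ref{maintheorem} as Corollary~\ref{pers.cor}. Since the statement is an open conjecture, this conditional reduction is all that the paper establishes, and your write-up matches it, with the added (correct) observations about the trace ideal and about $S$ not needing to be Noetherian.
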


\begin{remark}
	Conjecture~\ref{conjecture-persist-cd} holds under the stronger assumption that $S$ is a module-finite $R$-algebra. Indeed, let $c=\mathrm{cd}(I,R)$. By Gruson's theorem (see, e.g., \cite[Corollary~4.3]{Vas}), since $S$ is a faithful $R$-module, $H^c_{IS}(S) \cong S \otimes_R H^c_I(R) \neq 0$, since $H^c_I(R)\neq 0$.
\end{remark}

We observe that if Question~\ref{mainqu} has a positive answer for a ring $R$ and ideal $I$, then Conjecture~\ref{conjecture-persist-cd} holds for $R$, $I$. Indeed, if $\phi:S\to R$ is a nonzero $R$-linear map, so that $\phi(s)=r\neq 0$ for some $r\in R, s\in S$, let $\phi'=\phi \circ (\cdot s)$, so that $\phi'|_{R}=\cdot r$, the map of multiplication by $r$. Applying the functor $H^c_I(-)$, we obtain that the nonzero map of multiplication by $r$ on $H^c_I(R)$ factors through $H^c_{IS}(S)$, which must then be nonzero.

Thus, as a corollary of Theorem~\ref{maintheorem}, we obtain the following special case of Conjecture~\ref{conjecture-persist-cd}.

\begin{corollary}\label{pers.cor}
	If $R$ is a domain of positive characteristic, and $I$ is an ideal such that $\mathrm{cd}(I,R)=\mathrm{ara}(I,R)=c$, then for any solid $R$-algebra $S$, we have $\mathrm{cd}(IS,S)=\mathrm{ara}(IS,S)=c$.
\end{corollary}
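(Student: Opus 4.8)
The plan is to deduce this directly from Theorem~\ref{maintheorem} together with the observation immediately preceding the corollary, squeezing both $\cdim(IS,S)$ and $\ara(IS,S)$ between $c$ and $c$ by means of general inequalities. All the genuine content sits in the faithfulness statement of Theorem~\ref{maintheorem}; the corollary is then a matter of assembling standard bounds, and I would take care to invoke only facts valid for an arbitrary $R$-algebra $S$, since a solid algebra need not be Noetherian.

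For the cohomological dimension, I would first note that since $R$ is a domain of characteristic $p>0$ with $\cdim(I,R)=\ara(I)=c$, Theorem~\ref{maintheorem} gives $\Ann_R(H^c_I(R))=0$, so Question~\ref{mainqu} has an affirmative answer for this $R$ and $I$. The observation preceding the corollary then applies: a nonzero $R$-linear map $S\to R$, which exists because $S$ is solid, makes the nonzero multiplication map on $H^c_I(R)$ factor through $H^c_{IS}(S)$, forcing $H^c_{IS}(S)\neq 0$ and hence $\cdim(IS,S)\geq c$. Together with the general inequality $\cdim(IS,S)\leq\cdim(I,R)=c$ recorded at the start of \S\ref{perscd}, this gives $\cdim(IS,S)=c$.

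I would then bound the arithmetic rank from both sides. Since $\ara(I,R)=c$, there are $f_1,\dots,f_c\in R$ with $\sqrt{(f_1,\dots,f_c)}=\sqrt{I}$; their images generate $IS$ up to radical in $S$, so $\ara(IS,S)\leq c$. On the other hand, cohomological dimension is always a lower bound for arithmetic rank---the \v{C}ech complex on any $c$ elements generating $IS$ up to radical computes $H^\bullet_{IS}(S)$ and vanishes above degree $c$---so $\ara(IS,S)\geq\cdim(IS,S)=c$. Hence $\ara(IS,S)=c$, which finishes the argument.

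The one point I would watch is that $S$ is not assumed Noetherian. Each ingredient---the drop of $\cdim$ under base change, the bound $\cdim\leq\ara$, and the persistence observation---holds without any finiteness hypothesis, so I expect no real obstacle; but this is precisely where a careless appeal to a ``Noetherian'' lemma could introduce a gap, so I would verify each inequality at the stated level of generality before concluding.
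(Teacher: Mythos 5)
Your proposal is correct and follows essentially the same route as the paper: Theorem~\ref{maintheorem} gives faithfulness, the observation preceding the corollary (solidity forces $H^c_{IS}(S)\neq 0$) gives the lower bound on $\cdim(IS,S)$, and the standard inequalities $\cdim(IS,S)\leq\cdim(I,R)$, $\ara(IS,S)\leq\ara(I,R)$, and $\cdim\leq\ara$ squeeze everything to $c$, exactly as the paper intends. Your extra care about $S$ possibly being non-Noetherian is a sensible refinement of the same argument, not a departure from it.
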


\section*{Acknowledgements}
The authors thank the anonymous referee for suggesting the version~(\ref{qu-b}) of Question~\ref{mainqu}. We also thank Elo\'isa Grifo and Anurag K.~Singh for catching typos in an earlier version of this manuscript.

\quad\bigskip

\end{document}